\documentclass[12pt]{amsart}
\usepackage[colorlinks=true,citecolor=black,linkcolor=black,urlcolor=blue]{hyperref}
\usepackage{amsmath}
\usepackage[english,  activeacute]{babel}
\usepackage[utf8]{inputenc}
\usepackage{amssymb}
\usepackage{amsthm}
\usepackage{graphics,graphicx}
\usepackage{enumerate}
\usepackage{array}
\usepackage{bm}
\usepackage{a4wide}
\setcounter{tocdepth}{3}
\usepackage{color, url}
\usepackage{float}

\theoremstyle{plain}
\newtheorem{theorem}{Theorem}[section]
\newtheorem{proposition}[theorem]{Proposition}
\newtheorem{corollary}[theorem]{Corollary}

\theoremstyle{definition}

\newcommand{\Sp}{{\mathcal{S}_{\sf{Pal}}}}

\thispagestyle{empty}

\title{Palindromic and Colored Superdiagonal Compositions}

\author{Jazm\'in Mantilla}
\address{\noindent Escuela de Matem\'aticas,  Universidad Industrial de Santander, Bucaramanga,  COLOMBIA}
\email{jazlismaro@hotmail.com}

\author{Wilson Olaya-León}
\address{\noindent Escuela de Matem\'aticas, Universidad Industrial de Santander, Bucaramanga,  COLOMBIA}
\email{wolaya@uis.edu.co}

\author{Jos\'e L. Ram\'{\i}rez}
\address{\noindent Departamento de Matem\'aticas, Universidad Nacional de Colombia, Bogot\'a,  COLOMBIA}
\email{jlramirezr@unal.edu.co}
\urladdr{http://sites.google.com/site/ramirezrjl}

\date{\today}
\subjclass[2010]{05A15, 05A19}
\keywords{Compositions, palindromic compositions, colored compositions, generating functions, combinatorial identities.}

\begin{document}
\begin{abstract}
A superdiagonal composition is one in which the $i$-th part or summand is of size greater than or equal to $i$.  In this paper, we study the number of   palindromic superdiagonal compositions and colored superdiagonal compositions. In particular, we give generating functions and explicit combinatorial formulas involving binomial coefficients and Stirling numbers of the first kind. \end{abstract}

\maketitle

\section{Introduction and Notation}
A \emph{composition} of a positive integer $n$ is a sequence of positive integers $\sigma=(\sigma_1, \sigma_2,\ldots, \sigma_\ell)$ such that $\sigma_1+\sigma_2+\cdots +\sigma_\ell=n$. The summands $\sigma_i$ are called \emph{parts} of the composition and  $n$ is referred to as the \emph{weight} of $\sigma$.  For example, the compositions of $4$ are
$$(\texttt{4}), \quad (\texttt{3},\texttt{1}), \quad(\texttt{1},\texttt{3}), \quad(\texttt{2},\texttt{2}), \quad (\texttt{2},\texttt{1},\texttt{1}), \quad (\texttt{1},\texttt{2},\texttt{1}), \quad (\texttt{1},\texttt{1},\texttt{2}), \quad (\texttt{1},\texttt{1},\texttt{1},\texttt{1}).$$
A \emph{palindromic} (or \emph{self-inverse}) \emph{composition} is one whose sequence of parts is the same  whether it is read from left to right or right to left.
 For example, the palindromic compositions of $4$ are
$$(\texttt{4}), \quad (\texttt{2},\texttt{2}), \quad (\texttt{1},\texttt{2},\texttt{1}), \quad  (\texttt{1},\texttt{1},\texttt{1},\texttt{1}).$$

Hoggatt and Bicknell  \cite{Hoggatt}  studied palindromic compositions having parts in a subset of positive integers. In particular,  they showed  that the total number of palindromic compositions of $n$ is given by $2^{\lfloor n/2 \rfloor}$. Moreover, the number of palindromic compositions of $n$ with parts  $\texttt{1}$ and $\texttt{2}$ are the  interleaved Fibonacci sequence $$1, \quad1, \quad 2, \quad 1, \quad 3, \quad 2, \quad 5, \quad 3, \quad8, \quad 5, \quad 13, \quad 8, \quad 21, \dots$$

The literature contains several generalizations and restrictions of the  compositions. Much of them are related to the kind of parts or summands, for example  compositions with even or odd parts \cite{Hoggatt, Mansour}, with parts in arithmetical progressions \cite{Acosta, Mun}, compositions with colored parts \cite{Aga, Coll, Shap1}, colored palindromic compositions \cite{Nar1, Guo3, Bravo2}, superdiagonal compositions \cite{Deutsch},  among other restrictions.  For further information on compositions, we refer the reader to the text by Heubach and Mansour \cite{Mansour}.

 In this paper, we study  \emph{palindromic superdiagonal  compositions}, that is a palindromic composition $(\sigma_1, \sigma_2,\ldots, \sigma_\ell)$ of $n$, with the additional condition  $\sigma_i\geq i$, for $i=1, 2, \dots, \ell$.  For example, 
 the palindromic superdiagonal compositions of $10$ are
$$(\texttt{10}), \quad (\texttt{5},\texttt{5}), \quad (\texttt{4},\texttt{2},\texttt{4}), \quad  (\texttt{3},\texttt{4},\texttt{3}).$$

 Deutsch et al. \cite{Deutsch} proved that the total number of superdiagonal compositions is given by the combinatorial sum:
 $$\sum_{k\geq 1}\binom{n-\binom k2-1}{k-1}.$$

Agarwal \cite{Aga} generalized the concept of a composition by allowing the parts to come in various colors. By a \emph{colored composition} of a positive integer $n$ we mean a composition $\sigma=(\sigma_1, \sigma_2,\dots, \sigma_\ell)$ such that the part  of size $i$ can come in one of $i$ different colors.  The colors of the summand $\texttt{i}$ are denoted by subscripts $i_1, i_2, \ldots, i_i$  for each $i \geq 1$. For example, the colored compositions of $4$  are given by
\begin{align*}
&(\texttt{4}_\texttt{1}),\quad (\texttt{4}_\texttt{2})\quad(\texttt{4}_\texttt{3}), \quad(\texttt{4}_\texttt{4}), \quad(\texttt{3}_\texttt{1}, \texttt{1}_\texttt{1}), \quad(\texttt{3}_\texttt{2}, \texttt{1}_\texttt{1}), \quad
(\texttt{3}_\texttt{3},  \texttt{1}_\texttt{1}), \quad   (\texttt{1}_\texttt{1}, \texttt{3}_\texttt{1}), \quad(\texttt{1}_\texttt{1}, \texttt{3}_\texttt{2}), \quad(\texttt{1}_\texttt{1},  \texttt{3}_\texttt{3}), \\
&(\texttt{2}_\texttt{1}, \texttt{2}_\texttt{1}), \quad (\texttt{2}_\texttt{1},  \texttt{2}_\texttt{2}), \quad(\texttt{2}_\texttt{2},  \texttt{2}_\texttt{1}), \quad(\texttt{2}_\texttt{2},  \texttt{2}_\texttt{2}), \quad(\texttt{2}_\texttt{1}, \texttt{1}_\texttt{1},  \texttt{1}_\texttt{1}), \quad(\texttt{2}_\texttt{2},  \texttt{1}_\texttt{1},  \texttt{1}_\texttt{1}), \quad (\texttt{1}_\texttt{1}, \texttt{2}_\texttt{1},  \texttt{1}_\texttt{1}), \\
&(\texttt{1}_\texttt{1},  \texttt{2}_\texttt{2},  \texttt{1}_\texttt{1}), \quad(\texttt{1}_\texttt{1},  \texttt{1}_\texttt{1},  \texttt{2}_\texttt{1}), \quad (\texttt{1}_\texttt{1}, \texttt{1}_\texttt{1},  \texttt{2}_\texttt{2}), \quad(\texttt{1}_\texttt{1}, \texttt{1}_\texttt{1},  \texttt{1}_\texttt{1}, \texttt{1}_\texttt{1}).
\end{align*}
A \emph{colored superdiagonal composition} is a colored composition such that the $i$-th part $\sigma_i$ satisfies  $\sigma_i\geq i$, for each $i\geq 1$. The colored superdiagonal compositions of $4$  are 
\begin{align}\label{ej1}
\begin{split}
&(\texttt{4}_\texttt{1}),\quad (\texttt{4}_\texttt{2}), \quad(\texttt{4}_\texttt{3}), \quad(\texttt{4}_\texttt{4}), \quad
   (\texttt{1}_\texttt{1}, \texttt{3}_\texttt{1}), \quad(\texttt{1}_\texttt{1}, \texttt{3}_\texttt{2}), \quad(\texttt{1}_\texttt{1}, \texttt{3}_\texttt{3}),\\
&(\texttt{2}_\texttt{1},\texttt{2}_\texttt{1}), \quad(\texttt{2}_\texttt{1}, \texttt{2}_\texttt{2}), \quad(\texttt{2}_\texttt{2},  \texttt{2}_\texttt{1}), \quad(\texttt{2}_\texttt{2},  \texttt{2}_\texttt{2}).
\end{split}
\end{align}

The goal of this paper is to enumerate the palindromic superdiagonal compositions and colored superdiagonal compositions. In particular we give the generating functions and  explicit combinatorial formulas involving binomial coefficients and Stirling numbers of the first kind.  

\section{Enumeration of the Palindromic Superdiagonal Compositions}

Let $\Sp$ denote the set of palindromic superdiagonal compositions.   The composition $(\sigma_1, \sigma_2,\ldots, \sigma_\ell)$ of $n$ can be represented as a \emph{bargraph} of $\ell$ columns, such that the $i$-th column contains $\sigma_i$ cells for $1\leq i \leq \ell$.  For example, in Figure \ref{Fig1} we show the superdiagonal compositions of $n=10$ with their bargraph representations. 

\begin{figure}[h]
\centering
  \includegraphics[scale=0.8]{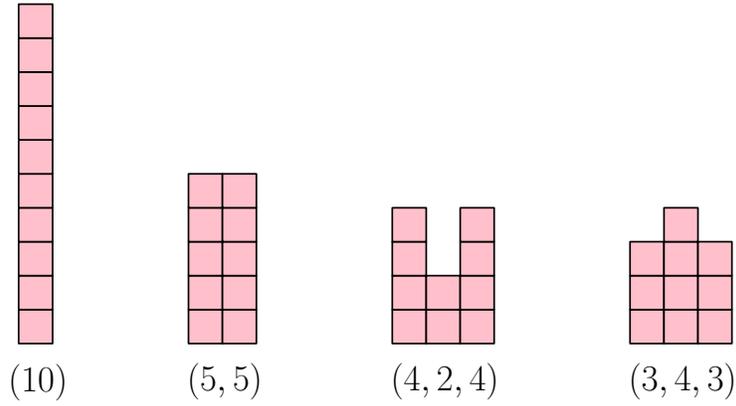}
  \caption{Palindromic compositions of $n=10$.}
  \label{Fig1}
\end{figure}

Let $\sigma$ be a composition and let us denote the weight  of  $\sigma$ by $|\sigma|$ and the number of parts of $\sigma$ by $\rho(\sigma)$. Using these parameters, we introduce this bivariate generating function
 $$S(x, y):=\sum_{\sigma \in \Sp}x^{|\sigma|}y^{\rho(\sigma)}.$$
 
In Theorem \ref{gfun1} we give an expression for the generating function $S(x,y)$.
\begin{theorem}\label{gfun1}
The bivariate generating function $S(x,y)$ is given by
  $$S(x,y)=\sum_{\sigma\in \Sp}x^{|\sigma|}y^{\rho(\sigma)}=\sum_{m\geq 0}\left(\frac{x^{3m^2+m}}{(1-x^2)^m}y^{2m} + \frac{x^{3m^2+4m+1}}{(1-x)(1-x^2)^{m}}y^{2m+1} \right).$$
\end{theorem}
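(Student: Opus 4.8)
The plan is to decompose a palindromic superdiagonal composition according to its "center" and count the left half directly. A palindrome $(\sigma_1,\dots,\sigma_\ell)$ is determined by its first $m=\lceil\ell/2\rceil$ parts, so I would split into two cases: $\ell=2m$ even, with the palindrome being $(\sigma_1,\dots,\sigma_m,\sigma_m,\dots,\sigma_1)$; and $\ell=2m+1$ odd, with the palindrome being $(\sigma_1,\dots,\sigma_m,\sigma_{m+1},\sigma_m,\dots,\sigma_1)$. The superdiagonal condition $\sigma_i\ge i$ must hold for all $i=1,\dots,\ell$; the key observation is that once the first half satisfies $\sigma_i\ge i$ for $i\le m$ and the central part (in the odd case) satisfies $\sigma_{m+1}\ge m+1$, the constraint is automatically satisfied for the mirrored parts, because the part in position $m+1+j$ equals $\sigma_{m-j}\ge m-j$, and since the composition is increasing in the required lower bound only up to the center, one checks $\sigma_{m-j}\ge m-j$ suffices when... actually one must verify that $\sigma_{m-j}\ge m+1+j$ is NOT required — and indeed it is, so the real constraint on the left half is the stronger $\sigma_i\ge \ell+1-i$ for the parts past the center. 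I would therefore re-derive the correct per-part bound: in the even case part $i\le m$ must satisfy $\sigma_i\ge i$, but its mirror in position $2m+1-i$ must satisfy $\sigma_i\ge 2m+1-i$; the binding constraint is the larger one, so effectively $\sigma_i\ge \max(i,2m+1-i)=2m+1-i$ for $i\le m$. Writing $\tau_i=\sigma_i-(2m+1-i)\ge 0$ and reversing the index turns the left half into an arbitrary weak composition, which is what produces the clean generating function.

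Concretely, for the even case with $2m$ parts, the minimal palindrome is $(m, m{+}1, \dots, 2m{-}1, 2m, 2m, 2m{-}1,\dots, m{+}1, m)$... let me instead track weight: $|\sigma| = 2(\sigma_1+\cdots+\sigma_m)$, and the minimum of $\sigma_1+\cdots+\sigma_m$ subject to $\sigma_i\ge 2m+1-i$ is $\sum_{i=1}^m(2m+1-i)=\sum_{j=m+1}^{2m}j = \tfrac{3m^2+m}{2}$, giving minimal weight $3m^2+m$ and $y$-exponent $2m$. Each $\sigma_i$ can independently exceed its minimum by any amount, but since each unit of excess in $\sigma_i$ adds $2$ to $|\sigma|$ (mirrored), the excess in the $m$ left parts contributes a factor $\prod_{i=1}^m \frac{1}{1-x^2}=\frac{1}{(1-x^2)^m}$. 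This yields the term $\frac{x^{3m^2+m}}{(1-x^2)^m}y^{2m}$. For the odd case with $2m+1$ parts, the left $m$ parts each need $\sigma_i\ge 2m+2-i$ and the center needs $\sigma_{m+1}\ge m+1$; summing minima gives $\sum_{i=1}^m(2m+2-i) + (m+1) = \sum_{j=m+2}^{2m+1}j + (m+1) = \tfrac{3m^2+7m}{2}+ (m+1)$, which I would simplify to confirm it equals $\tfrac{3m^2+5m}{2}+(m+1)$... the target exponent is $3m^2+4m+1$, i.e. minimal half-plus-center weight $\tfrac{3m^2+4m}{2} + \tfrac{1}{2}$ — so I must be careful: the center is counted once, the left parts twice, so minimal weight $= 2\cdot\sum_{i=1}^m(2m+2-i) + (m+1)$; I would compute this sum and verify it equals $3m^2+4m+1$. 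The excess in the $m$ doubled parts contributes $\frac{1}{(1-x^2)^m}$ and the excess in the single center part contributes $\frac{1}{1-x}$, giving $\frac{x^{3m^2+4m+1}}{(1-x)(1-x^2)^m}y^{2m+1}$. Summing over all $m\ge 0$ (with $m=0$ in the even case giving the empty composition, contributing $1$) produces the stated formula.

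The main obstacle I anticipate is getting the per-part lower bounds exactly right — in particular recognizing that the superdiagonal condition, applied to a palindrome, forces the left-half parts to satisfy the \emph{reversed} bound $\sigma_i\ge \ell+1-i$ rather than the naive $\sigma_i\ge i$, and then bookkeeping the two arithmetic-series sums $\sum_{i=1}^m(2m+1-i)$ and $2\sum_{i=1}^m(2m+2-i)+(m+1)$ so that they land on $3m^2+m$ and $3m^2+4m+1$ respectively. Once those exponents check out, the generating-function assembly is a routine product of geometric series, and the decomposition into even/odd halves is clearly a bijection, so no further subtlety remains. I would present the argument as: (1) state the half-determination bijection; (2) identify the correct constraint on each left-half part; (3) substitute $\tau_i = \sigma_i - (\ell+1-i)$ to get a free weak composition; (4) read off the weight and part-count contributions as a monomial times a product of $\frac{1}{1-x^2}$ factors (plus one $\frac{1}{1-x}$ in the odd case); (5) sum over $m$.
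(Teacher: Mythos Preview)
Your proposal is correct and follows essentially the same route as the paper: split by parity of the number of parts, observe that the palindrome is determined by its left half (plus center), identify that the binding superdiagonal constraint on the $i$-th left part comes from its mirrored position (giving $\sigma_i\ge 2m+1-i$ in the even case and $\sigma_i\ge 2m+2-i$ in the odd case), compute the minimal weight, and read off a product of geometric series in $x^2$ (plus one factor $\tfrac{1}{1-x}$ for the center). Your arithmetic checks out; in particular $2\sum_{i=1}^m(2m+2-i)+(m+1)=3m^2+3m+(m+1)=3m^2+4m+1$, so the exponent in the odd term is exactly as claimed.
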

\begin{proof}
Let $\sigma=(\sigma_1, \sigma_2, \dots, \sigma_{2m})$ be a  palindromic superdiagonal composition of $n$ with $2m$ parts. From the definition we have the condition $\sigma_i=\sigma_{2i}\geq 2i$, for $i=1, \dots, m$, see Figure \ref{deco1}  for a graphical representation of this case.
\begin{figure}[H]
\centering
  \includegraphics[scale=0.8]{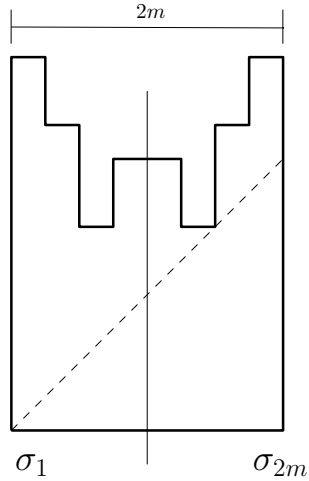}
  \caption{Decomposition of a  palindromic  superdiagonal  composition.}
  \label{deco1}
\end{figure}
The columns $i$-th and $2i$-th contribute to the generating function the term $$x^{2i}y^2+x^{2i+2}y^2+x^{2i+4}y^2+\cdots=\frac{x^{2i}y^2}{1-x^2}.$$
 Therefore the composition $\sigma$ contributes to the generating function the term $$\frac{x^{2m}y^2}{1-x^2}\frac{x^{2m+2}y^2}{1-x^2}\cdots\frac{x^{4m}y^2}{1-x^2}=\frac{x^{3m^2+m}y^{2m}}{(1-x^2)^m}.$$

If the number of parts is odd, that is $\sigma=(\sigma_1, \sigma_2, \dots, \sigma_{2m-1})$, then from a similar argument the contribution to the generating function is given by $$\frac{x^{3m^2+4m+1}}{(1-x)(1-x^2)^{m}}y^{2m+1}.$$
Summing in the above two cases over $m$ we obtain the desired result.
\end{proof}

As a series expansion, the generating function $S(x,y)$ begins with
\begin{multline*}
S(x,y)=1+x y+x^2 y+x^3 y+x^4 \left(y^2+y\right)+x^5 y+x^6 \left(y^2+y\right)+x^7 y+x^8 \left(y^3+y^2+y\right)\\+x^9 \left(y^3+y\right)+\bm{x^{10}\left(2 y^3+y^2+y\right)}+x^{11}
   \left(2 y^3+y\right)+x^{12} \left(3 y^3+y^2+y\right)+\cdots
\end{multline*}
Notice that Figure \ref{Fig1} shows the palindromic superdiagonal compositions corresponding to the  bold coefficient in the above series.
Let $s(n)$ and $s(n,k)$ denote the number of palindromic superdiagonal compositions of $n$ and the number of palindromic superdiagonal compositions of $n$ with exactly $k$ parts. Note that $s(n)=\sum_{k\geq 1}s(n,k)$. In Table \ref{tab1} we show the first few values of the sequence $s(n,k)$.

 \tiny
\begin{table}[htp]
\begin{center}
\begin{tabular}{c|cccccccccccccccccccccccccccccc}
$k\backslash n$ & 1 & 2 & 3 & 4 & 5 & 6 & 7 & 8 & 9 & 10 & 11 & 12 & 13 & 14 & 15 & 16 & 17 & 18 & 19 & 20 & 21 & 22 & 23 & 24 & 25 & 26   \\ \hline
1 & 1 & 1 & 1 & 1 & 1 & 1 & 1 & 1 & 1 & 1 & 1 & 1 & 1 & 1 & 1 & 1 & 1 & 1 & 1 & 1 & 1 & 1 & 1 & 1 & 1 & 1  \\
2& 0 & 0 & 0 & 1 & 0 & 1 & 0 & 1 & 0 & 1 & 0 & 1 & 0 & 1 & 0 & 1 & 0 & 1 & 0 & 1 & 0 & 1 & 0 & 1 & 0 & 1  \\
3& 0 & 0 & 0 & 0 & 0 & 0 & 0 & 1 & 1 & 2 & 2 & 3 & 3 & 4 & 4 & 5 & 5 & 6 & 6 & 7 & 7 & 8 & 8 & 9 & 9 & 10  \\
4& 0 & 0 & 0 & 0 & 0 & 0 & 0 & 0 & 0 & 0 & 0 & 0 & 0 & 1 & 0 & 2 & 0 & 3 & 0 & 4 & 0 & 5 & 0 & 6 & 0 & 7 \\
5& 0 & 0 & 0 & 0 & 0 & 0 & 0 & 0 & 0 & 0 & 0 & 0 & 0 & 0 & 0 & 0 & 0 & 0 & 0 & 0 & 1 & 1 & 3 & 3 & 6 & 6  \\
\end{tabular}
\end{center}
\caption{Values of $s(n,k)$, for $1\leq n \leq 26$ and $1\leq k \leq5$.}
\label{tab1}
\end{table}

\normalsize
Setting $y=1$ in Theorem \ref{gfun1} implies that the generating function for the total number of palindromic superdiagonal compositions is the following
\begin{align*}
  S(x,1)&=\sum_{n\geq 0}s(n)x^n=\sum_{m\geq 0}\frac{x^{3m^2+m}(1-x+x^{3m+1})}{(1-x)(1-x^2)^m}.
  \end{align*}
The first few values of the sequence $s(n)$  for $0\leq n \leq 28$ are 
\begin{multline*}
1, \,1, \,1, \,1, \,2, \,1, \,2, \,1, \,3, \,2, \,4, \,3, \,5, \,4, \,7, \,5, \,9, \,6, \,11, \,7, \,13, \,9, \,16, \,12, \,20, \,16, \,25, \,21, \,31 , \dots    
\end{multline*}

In Theorem \ref{teo2} we give a combinatorial expression for the sequence $s(n,k)$.

\begin{theorem}\label{teo2}
The number of palindromic superdiagonal compositions 
\begin{enumerate}
\item of $2n$ with $2k$ parts equals 
$$s(2n,2k)=\binom{n-\binom{k+1}{2}-2\binom{k}{2}-1}{k-1},$$
\item of either $2n$ or $2n-1$ with $2k-1$ parts equals 
$$s(n,2k-1)=\binom{\lfloor\frac{n-3k^2}{2}\rfloor + 2k - 1}{k-1}.$$
\end{enumerate}
\end{theorem}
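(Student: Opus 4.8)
The plan is to read both formulas straight off the closed form for $S(x,y)$ in Theorem~\ref{gfun1}, since the right-hand side there is already split by the parity of the number of parts; each case then reduces to extracting a coefficient from a rational function in $x$ and simplifying binomial arguments.

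For part (1), the only summand in $S(x,y)$ carrying $y^{2k}$ is $\frac{x^{3k^2+k}}{(1-x^2)^k}\,y^{2k}$, so $s(N,2k)=[x^N]\,x^{3k^2+k}(1-x^2)^{-k}$. Using $(1-x^2)^{-k}=\sum_{j\ge 0}\binom{j+k-1}{k-1}x^{2j}$, this coefficient vanishes unless $N\equiv 3k^2+k\pmod 2$; since $3k^2+k=k(3k+1)$ is always even, only $N=2n$ contributes, and then $s(2n,2k)=\binom{j+k-1}{k-1}$ with $2j=2n-3k^2-k$, i.e. $j=n-\frac{3k^2+k}{2}$. Hence the binomial argument is $j+k-1=n-\frac{3k^2-k}{2}-1$, and the claim follows from the elementary identity $\frac{3k^2-k}{2}=\binom{k+1}{2}+2\binom{k}{2}$, obtained by expanding both sides over a common denominator.

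For part (2), the only summand carrying $y^{2k-1}$ is $\frac{x^{3(k-1)^2+4(k-1)+1}}{(1-x)(1-x^2)^{k-1}}\,y^{2k-1}$, and the exponent simplifies to $3(k-1)^2+4(k-1)+1=3k^2-2k$. Thus $s(n,2k-1)=[x^{r}]\,(1-x)^{-1}(1-x^2)^{-(k-1)}$ with $r=n-3k^2+2k$. Writing $(1-x)^{-1}=\sum_{a\ge 0}x^a$ and $(1-x^2)^{-(k-1)}=\sum_{b\ge 0}\binom{b+k-2}{k-2}x^{2b}$, this coefficient equals $\sum_{b=0}^{\lfloor r/2\rfloor}\binom{b+k-2}{k-2}$, and the hockey-stick identity $\sum_{b=0}^{B}\binom{b+m-1}{m-1}=\binom{B+m}{m}$ collapses it to $\binom{\lfloor r/2\rfloor+k-1}{k-1}$. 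Finally $\lfloor r/2\rfloor=\lfloor\frac{n-3k^2}{2}\rfloor+k$ because $k\in\Z$, which gives $\binom{\lfloor\frac{n-3k^2}{2}\rfloor+2k-1}{k-1}$ and simultaneously explains why $n$ and $n-1$ of the same residue class give the same count.

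The only genuine bookkeeping hazard is the interaction of the floor with the parity of $n$ in part (2); I would defuse it exactly as above, by pulling the integer term $k$ (coming from the $2k$ in the exponent $3k^2-2k$) out of $\lfloor r/2\rfloor$ before comparing with the statement, so that the residual floor $\lfloor (n-3k^2)/2\rfloor$ matches verbatim. Everything else is a routine coefficient extraction from a rational generating function combined with the standard binomial-series and hockey-stick identities.
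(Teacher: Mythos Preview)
Your proof is correct and follows essentially the same approach as the paper: both read the formulas directly off the generating function of Theorem~\ref{gfun1} by coefficient extraction. The paper only writes out the computation for part~(1) and dismisses part~(2) as ``similar''; your treatment of part~(2) is actually more complete, since you make explicit the hockey-stick step needed to absorb the extra factor $(1-x)^{-1}$ and you carefully track the floor against the integer shift $k$.
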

\begin{proof}
From the proof of Theorem \ref{gfun1} we have
\begin{align*}
s(2n,2k)&=[x^{2n}]\frac{x^{3k^2+k}}{(1-x^2)^k}\\
&=[x^{2n-3k^2-k}]\sum_{\ell\geq 0}\binom{k+\ell-1}{k-1}x^{2\ell}\\
&=\binom{k+\frac{2n-3k^2-k}{2}-1}{k-1}\\
&=\binom{n-\binom{k+1}{2}-2\binom{k}{2}-1}{k-1}.
\end{align*}
The combinatorial formula for $s(n,2k-1)$ is obtained in a similar manner.
\end{proof}

\section{Colored Superdiagonal Compositions}
In this section we give the generating function for the total number of colored superdiagonal compositions. Remember that a composition $\sigma=(\sigma_1, \dots, \sigma_\ell)$  of $n$ is a colored superdiagonal composition if $\sigma_i\geq i$ for all $1\leq i \leq \ell$, with the additional condition that if a part is of size $i$ then it  can come in one of $i$ different colors. For example, $(\texttt{3}_\texttt{2}, \texttt{2}_\texttt{1},\texttt{5}_\texttt{3}, \texttt{5}_\texttt{2}, \texttt{6}_\texttt{6})$ is a colored superdiagonal composition of $21$.

Let $c(n)$ denote the number of colored superdiagonal compositions of $n$.  In Theorem \ref{teo2b} we give the generating for this sequence. Before we need some definitions and one lemma.

Given integers $n, k \geq 0$, let ${n \brack k}$ denote the (unsigned) \emph{Stirling numbers of the first kind},
 which are defined as connection constants in the polynomial identity
\begin{equation}\label{w1prop1}
x(x+1)\cdots(x+(n-1))=\sum_{k=0}^n{n \brack k}x^k.
\end{equation}
This sequence counts the number of permutations on $n$ elements with $k$ cycles. It is also known that this sequence satisfies the recurrence relation
\begin{align}\label{eqs}
{n \brack k}=(n-1){n-1 \brack k} + {n-1 \brack k-1},
\end{align}
with the initial conditions ${0 \brack 0}=1$ and ${n \brack 0}={0 \brack n}=0$ for $n\geq 1$.

Let $m$ be a non-negative integer.  We introduce the polynomials defined by
\begin{align}\label{def1}
Q_m(x):=\prod_{\ell=1}^m(\ell - (\ell-1)x), \quad m\geq 1,\end{align}
with the initial value $Q_0(x)=1$.
The first few polynomials are
\begin{align*}
Q_0(x)&=1, \quad Q_1(x)=1, \quad Q_2(x)=2-x,\quad Q_3(x)=2x^2-7 x+6,\\
Q_4(x)&=-6 x^3+29 x^2-46 x+24,\\
Q_5(x)&=24 x^4-146 x^3+329x^2-326 x+120,\\
Q_6(x)&=-120 x^5+874 x^4-2521 x^3+3604 x^2-2556 x+720.
\end{align*}

\begin{proposition}\label{propo1}
The polynomials $Q_m(x)$ can be expressed as
$$Q_m(x)=\sum_{k=0}^mT(m,k)x^k,$$
where  $$T(m,k)=\sum_{i=0}^m \binom{i}{m-k}{m+1 \brack m+1-i}(-1)^{m+i+k}.$$
\end{proposition}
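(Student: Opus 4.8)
The plan is to first establish the auxiliary ``mixed'' expansion
$$Q_m(x)=\sum_{j=0}^{m}{m+1 \brack j+1}\,x^{j}(1-x)^{m-j},$$
and then to read off the coefficient of $x^{k}$. To obtain this auxiliary identity I would write each factor as $\ell-(\ell-1)x=(1-x)\bigl(\ell+\tfrac{x}{1-x}\bigr)$ (valid for $x\neq1$), so that $Q_m(x)=(1-x)^{m}\prod_{\ell=1}^{m}\bigl(\ell+\tfrac{x}{1-x}\bigr)$. Since
$$\prod_{\ell=1}^{m}(\ell+t)=\frac1t\cdot t(t+1)\cdots(t+m)=\frac1t\sum_{k=0}^{m+1}{m+1 \brack k}t^{k}=\sum_{j=0}^{m}{m+1 \brack j+1}t^{j}$$
by \eqref{w1prop1} (using ${m+1 \brack 0}=0$), substituting $t=x/(1-x)$ and multiplying through by $(1-x)^m$ gives the auxiliary identity for all $x\neq1$; since both sides are polynomials in $x$, it is an identity of polynomials. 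Alternatively, the auxiliary identity may be proved by induction on $m$ from $Q_{m+1}(x)=\bigl((m+1)-mx\bigr)Q_m(x)$ and the Stirling recurrence \eqref{eqs}, which avoids the rational substitution altogether.

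Next I would expand $(1-x)^{m-j}=\sum_{r\geq0}\binom{m-j}{r}(-1)^{r}x^{r}$, collect the coefficient of $x^{k}$ by setting $k=j+r$, and reindex via $i=m-j$. Then ${m+1 \brack j+1}={m+1 \brack m+1-i}$, one has $(-1)^{k-j}=(-1)^{m+i+k}$, and the binomial symmetry gives $\binom{m-j}{k-j}=\binom{i}{k-m+i}=\binom{i}{m-k}$, so the coefficient of $x^{k}$ becomes $\sum_{i=m-k}^{m}\binom{i}{m-k}{m+1 \brack m+1-i}(-1)^{m+i+k}$. Since $\binom{i}{m-k}=0$ whenever $i<m-k$, the lower summation limit can be replaced by $0$, and this is exactly $T(m,k)$.

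The steps are elementary. The only points needing a little care are the verification that passing through the rational substitution $t=x/(1-x)$ yields a bona fide polynomial identity (which is immediate, and in any case avoidable via the inductive proof of the auxiliary expansion) and the sign/index bookkeeping in the reindexing $i=m-j$ together with the binomial symmetry; I do not expect a substantive obstacle.
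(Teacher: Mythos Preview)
Your proof is correct, and it takes a genuinely different route from the paper's. The paper argues by recurrence: from $Q_m(x)=mQ_{m-1}(x)-(m-1)xQ_{m-1}(x)$ it reads off the recurrence $T(m,k)=mT(m-1,k)-(m-1)T(m-1,k-1)$ for the coefficients of $Q_m$, and then verifies (using the Stirling recurrence \eqref{eqs} and Pascal's rule) that the proposed closed form $H(m,k)=\sum_{i}\binom{i}{m-k}{m+1\brack m+1-i}(-1)^{m+i+k}$ satisfies the same recurrence and initial values. Your argument is instead a direct derivation: the factorization $\ell-(\ell-1)x=(1-x)\bigl(\ell+\tfrac{x}{1-x}\bigr)$ together with \eqref{w1prop1} produces the intermediate expansion $Q_m(x)=\sum_{j}{m+1\brack j+1}x^{j}(1-x)^{m-j}$, from which the claimed coefficient formula drops out by the binomial theorem and the reindexing $i=m-j$. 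Your approach has the advantage of \emph{discovering} the formula rather than merely verifying it, and it makes transparent why Stirling numbers of the first kind enter at all; the paper's recurrence check is more mechanical but sidesteps the rational substitution entirely. The reindexing and sign bookkeeping in your final step are correct as written.
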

\begin{proof}
Let $T(m,k)$ be the $k$-th coefficient of  $Q_m(x)$. From  \eqref{def1} we have 
$$Q_m(x)=mQ_{m-1}(x)-(m-1)xQ_{m-1}(x).$$
Then we obtain the recurrence relation  $T(m,k)=mT(m-1,k)-(m-1)T(m-1,k-1),$ with the initial conditions $T(0,0)=T(1,0)=1$ and $T(1,1)=0$. Let $H(m,k)=\sum_{i=0}^m \binom{i}{m-k}{m+1 \brack m+1-i}(-1)^{m+i+k}$. The sequences $T(m,k)$ and $H(m,k)$ satisfy the same recurrence relation and the same initial conditions. In fact, from \eqref{eqs} we have 
\begin{align*}
H(m,k)&=\sum_{i=0}^m \binom{i}{m-k}{m+1 \brack m+1-i}(-1)^{m+i+k}\\
&=\sum_{i=0}^m \binom{i}{m-k}\left(m{m \brack m+1-i}+{m \brack m-i}\right)(-1)^{m+i+k}\\
&=m\sum_{i=0}^{m-1} \binom{i+1}{m-k}{m \brack m-i}(-1)^{m+i+k-1}+\sum_{i=0}^m\binom{i}{m-k}{m \brack m-i}(-1)^{m+i+k}\\
&=m\sum_{i=0}^{m-1}\left(\binom{i}{m-k}+\binom{i}{m-k-1}\right){m \brack m-i}(-1)^{m+i+k-1} + H(m-1,k-1)\\
&=mH(m-1,k)-(m-1)H(m-1,k-1).
\end{align*}
Moreover, they satisfy the same initial conditions, i.e., $H(0,0)=H(1,0)=1$ and $H(1,1)=0$. 
\end{proof}

\begin{theorem}\label{teo2b}
The generating function for the number of colored superdiagonal compositions is 
$$C(x)=\sum_{m\geq 0}\frac{x^{\binom{m+1}{2}}}{(1-x)^{2m}}Q_m(x).$$
\end{theorem}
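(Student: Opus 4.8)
The plan is to build the generating function directly from the structure of a colored superdiagonal composition, separating off the contribution of each part and then summing a geometric-type series over the color and size choices. Fix the number of parts $\ell$, and write $\sigma=(\sigma_1,\dots,\sigma_\ell)$ with $\sigma_i\ge i$. The $i$-th part contributes $x^{\sigma_i}$ to the weight, and, since a part of size $j$ carries a weight of $j$ choices of color, the contribution of position $i$ summed over all admissible sizes $j\ge i$ is $\sum_{j\ge i} j\,x^{j}$. I would first record the closed form
\[
\sum_{j\ge i} j\,x^{j}=\frac{x^{i}\bigl(i-(i-1)x\bigr)}{(1-x)^{2}},
\]
which one checks by differentiating the geometric series (or by a short direct manipulation). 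This is the step where the factor $\ell-(\ell-1)x$ in the definition \eqref{def1} of $Q_m$ enters.

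Next I would multiply these contributions over $i=1,\dots,\ell$. The $x$-powers accumulate as $x^{1+2+\cdots+\ell}=x^{\binom{\ell+1}{2}}$, each position brings a denominator factor $(1-x)^2$ giving $(1-x)^{2\ell}$, and the numerators multiply to $\prod_{i=1}^{\ell}\bigl(i-(i-1)x\bigr)=Q_\ell(x)$ by \eqref{def1}. Hence the generating function for colored superdiagonal compositions with exactly $\ell$ parts is
\[
\frac{x^{\binom{\ell+1}{2}}}{(1-x)^{2\ell}}\,Q_\ell(x).
\]
Summing over $\ell\ge 0$ (the empty composition, $\ell=0$, contributing the term $1$, consistent with $Q_0(x)=1$ and $\binom{1}{2}=0$) yields exactly $C(x)=\sum_{m\ge0}\dfrac{x^{\binom{m+1}{2}}}{(1-x)^{2m}}Q_m(x)$, as claimed.

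The argument is essentially a routine ``decompose by parts and take a product of geometric series'' computation, so there is no deep obstacle; the one place to be careful is the per-position sum $\sum_{j\ge i} j x^j$ and making sure the size constraint $\sigma_i\ge i$ is correctly encoded as the lower limit of summation, together with the convention at $\ell=0$. If one wants extra rigor, the interchange of the (finite) product over positions with the infinite sums over sizes should be justified as a formal power series identity, but since each factor is a well-defined element of $\mathbb{Z}[[x]]$ with the product converging in the $(x)$-adic topology, this is immediate. Optionally, one could also remark that Proposition \ref{propo1} then converts this product form of $Q_m(x)$ into the explicit binomial–Stirling coefficients $T(m,k)$, giving an explicit formula for $c(n)$ after expanding $1/(1-x)^{2m}$.
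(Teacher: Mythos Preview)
Your proof is correct and follows essentially the same approach as the paper: fix the number of parts, compute the per-position contribution $\sum_{j\ge i} j x^{j}=x^{i}(i-(i-1)x)/(1-x)^{2}$, take the product over positions to obtain $x^{\binom{\ell+1}{2}}Q_\ell(x)/(1-x)^{2\ell}$, and sum over $\ell$. Your write-up is in fact slightly more detailed than the paper's in handling the $\ell=0$ term and the formal power series justification.
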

\begin{proof}
Let $\sigma=(\sigma_1,\dots,\sigma_m)$ be a non-empty colored superdiagonal  composition of $n$ with  $m$ parts.   If $\sigma_i=\ell$ with  $\ell \geq i$, then $\sigma_i$ contributes to the generating function the term $\ell x^\ell$, for $\ell\geq i$ and $1\leq i \leq m$. Let $C_m(x)$ be the generating function of the colored superdiagonal compositions  with $m$ parts. Then we have the following expression
 \begin{align*}
 C_m(x)&=\left(\sum_{i\geq 1}ix^i\right)\left(\sum_{i\geq 2}ix^i\right)\cdots \left(\sum_{i\geq m}ix^i\right)\\
&=\frac{x}{(1-x)^2}\frac{(2-x)x^2}{(1-x)^2}\cdots\frac{(m-(m-1)x)x^m}{(1-x)^2}\\
&=\frac{x^{\binom{m+1}{2}}}{(1-x)^{2m}}\prod_{\ell=1}^m(\ell-(\ell-1)x)\\
&=\frac{x^{\binom{m+1}{2}}}{(1-x)^{2m}}Q_m(x).
 \end{align*}
 Finally, summing the last expression over $m \geq 0$, we get the desired result. \end{proof}

From Theorem \ref{teo2b} and Proposition \ref{propo1} we obtain the following corollary.
\begin{corollary}
The number of colored superdiagonal compositions of $n$ is given by
$$c(n)=\sum_{m, \ell \geq 0}\binom{2m+\ell-1}{\ell}T\left(m,n-\binom{m+1}{2}-\ell\right).$$
\end{corollary}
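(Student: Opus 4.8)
The plan is to extract coefficients from the generating function in Theorem~\ref{teo2b} by expanding each of its two factors, $1/(1-x)^{2m}$ and $Q_m(x)$, into explicit power series and then reading off the coefficient of $x^n$ in the product, summed over $m$. First I would recall the standard negative-binomial expansion
$$\frac{1}{(1-x)^{2m}}=\sum_{\ell\ge 0}\binom{2m+\ell-1}{\ell}x^\ell,$$
so that each summand of $C(x)$ becomes
$$\frac{x^{\binom{m+1}{2}}}{(1-x)^{2m}}Q_m(x)=x^{\binom{m+1}{2}}\left(\sum_{\ell\ge 0}\binom{2m+\ell-1}{\ell}x^\ell\right)\left(\sum_{k\ge 0}T(m,k)x^k\right),$$
using Proposition~\ref{propo1} for the coefficients of $Q_m(x)$. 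Note $Q_m(x)$ is a polynomial of degree $m$, so the inner $k$-sum is finite, which keeps everything well defined.

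Next I would collect the coefficient of $x^n$ on both sides. On the right, a contribution arises whenever $\binom{m+1}{2}+\ell+k=n$, i.e.\ $k=n-\binom{m+1}{2}-\ell$. Substituting this value of $k$ into $T(m,k)$ gives
$$c(n)=[x^n]C(x)=\sum_{m\ge 0}\ \sum_{\ell\ge 0}\binom{2m+\ell-1}{\ell}\,T\!\left(m,n-\binom{m+1}{2}-\ell\right),$$
which is exactly the claimed formula once one observes that terms with $n-\binom{m+1}{2}-\ell<0$ or $>m$ vanish because $T(m,k)=0$ outside $0\le k\le m$, so the double sum is in fact finite and there is no convergence issue. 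It then remains only to note that $c(n)$ as defined (the number of colored superdiagonal compositions of $n$) equals $[x^n]C(x)$ by Theorem~\ref{teo2b}.

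There is essentially no hard step here: the argument is a routine coefficient extraction, the only points requiring a line of care being (i) justifying that the generating-function identity of Theorem~\ref{teo2b} may be manipulated termwise (each fixed power $x^n$ receives contributions from only finitely many $m$, since $\binom{m+1}{2}>n$ for large $m$), and (ii) absorbing the support condition on $T(m,k)$ so that the sum can be written over all $m,\ell\ge 0$ without explicit bounds. If one wanted to be fully careful about convergence, I would point out that for each $n$ only $m$ with $\binom{m+1}{2}\le n$ contribute, and for each such $m$ only $\ell$ with $\ell\le n-\binom{m+1}{2}$ contribute, so the sum defining $c(n)$ is a finite sum. I expect no genuine obstacle; the corollary is a direct consequence of combining Theorem~\ref{teo2b} with Proposition~\ref{propo1} and the binomial series.
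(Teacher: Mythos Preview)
Your proposal is correct and is exactly the argument the paper has in mind: the corollary is stated as an immediate consequence of Theorem~\ref{teo2b} and Proposition~\ref{propo1}, and your coefficient extraction via the negative-binomial expansion of $(1-x)^{-2m}$ together with $Q_m(x)=\sum_k T(m,k)x^k$ is precisely that deduction. Your remarks on finiteness of the sum (only $m$ with $\binom{m+1}{2}\le n$ contribute, and $T(m,k)$ vanishes outside $0\le k\le m$) are appropriate and complete the justification.
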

The first few values of the  sequence  $c(n)$ are
$$1, \quad 1, \quad 2, \quad 5, \quad \textbf{11}, \quad 21, \quad 42, \quad 86, \quad 171, \quad 322, \quad 596, \dots$$ 

Notice that Equation \eqref{ej1} shows the colored superdiagonal compositions corresponding to the  bold term in the above sequence.

\end{document}